\numberwithin{equation}{section}
\numberwithin{figure}{section}
\newtheorem{theorem}{Theorem}[section]
\newtheorem{lemma}[theorem]{Lemma}
\newtheorem{proposition}[theorem]{Proposition}
\newtheorem{remark}[theorem]{Remark}
\newtheorem{example}[theorem]{Example}
\theoremstyle{definition}
\newtheorem{definition}[theorem]{Definition}
\newcommand{\al}{\alpha}
\newcommand{\be}{\beta}
\newcommand{\la}{\lambda}
\newcommand{\si}{\sigma}
\newcommand{\de}{\delta}
\newcommand{\lee}{\langle}
\newcommand{\ree}{\rangle}
\newcommand{\mf}{\mathbf}
\newcommand{\mr}{\mathrm}
\newcommand{\C}{{\mathbb{C}}}
\newcommand{\Z}{{\mathbb{Z}}}
\newcommand{\R}{{\mathbb{R}}}
\newcommand{\mfc}{\mf{c}}
\newcommand{\mfl}{\mf{\ell}}
\renewcommand{\t}{\mathfrak{t}}
\begin{document}

\title{Grossberg-Karshon twisted cubes and basepoint-free divisors}

\author{Megumi Harada}
\address{Department of Mathematics and
Statistics\\ McMaster University\\ 1280 Main Street West\\ Hamilton, Ontario L8S4K1\\ Canada}
\email{Megumi.Harada@math.mcmaster.ca}
\urladdr{\url{http://www.math.mcmaster.ca/Megumi.Harada/}}

\author{Jihyeon Jessie Yang}
\address{Department of Mathematics and
Statistics\\ McMaster University\\ 1280 Main Street West\\ Hamilton, Ontario L8S4K1\\ Canada}
\email{jyang@math.mcmaster.ca}
\urladdr{\url{http://www.math.mcmaster.ca/~jyang}}
\thanks{}

\date{\today}

\begin{abstract}
Let $G$ be a complex semisimple simply connected linear algebraic
group.
The main result of this note is to give several equivalent criteria for the
\emph{untwistedness} of the \emph{twisted cubes}
introduced by Grossberg and Karshon. In certain cases arising from
representation theory, Grossberg and Karshon obtained a Demazure-type character
formula for irreducible $G$-representations
as a sum over lattice
points (counted with sign according to a density function)
of these twisted cubes. A twisted cube is untwisted when it is a
``true'' (i.e. closed, convex) polytope; in this case,
Grossberg and Karshon's character formula becomes a purely \emph{positive}
formula with no multiplicities, i.e. each
lattice point appears precisely once in the formula, with coefficient $+1$.
One of our
equivalent conditions for untwistedness is that a certain
divisor on the special fiber of a
toric degeneration of a
Bott-Samelson variety, as constructed by Pasquier, is
basepoint-free.
We also show that the
strict positivity of some of the defining constants for the
twisted cube, together with convexity (of its
support), is enough to guarantee untwistedness.
 Finally, in the special case when the twisted
cube arises from the representation-theoretic data of $\lambda$ an
integral weight and $\underline{w}$ a choice of word
decomposition of a Weyl group element, we give two simple necessary
conditions for untwistedness which is stated in terms of $\lambda$ and
$\underline{w}$.
\end{abstract}

\keywords{Twisted cubes; Demazure character formula; Bott-Samelson
  variety; toric variety; toric divisor}
\subjclass[2010]{Primary: 14M25, 20G05; Secondary: 14C20}

\maketitle

\setcounter{tocdepth}{1}

\tableofcontents
\section*{Introduction}

Constructing a combinatorial model for a basis of a representation
is a fruitful technique in modern representation theory, as exhibited
by the famous theory of crystal bases and string polytopes. Another
well-known example from toric geometry is the bijective correspondence
between the lattice points in the moment polytope of a nonsingular projective toric variety $X$
with a basis consisting of $T$-weight vectors of the space $H^0(X,L)$ of holomorphic
sections of the prequantum line bundle $L$ over $X$. These two examples
are linked via toric degenerations: Kaveh \cite{Kaveh} recently showed that string
polytopes can be obtained as Okounkov bodies of flag varieties $G/B$,
and using results of Anderson\cite{Anderson} one can show that in this case there is a toric degeneration of
$G/B$ to a toric variety $X$ whose corresponding polytope is the string polytope.

In much earlier work (from the 1990s) Grossberg and Karshon \cite{Grossberg-Karshon} also
constructed  degenerations of complex structures from Bott-Samelson varieties to toric varieties
(specifically,  Bott towers) and consequently obtained a Demazure-type character
formula for irreducible representations of complex semisimple
(simply-connected) linear algebraic groups $G$. Their character
formula
can be combinatorially interpreted in terms of \emph{twisted cubes}
(cf. Definition~\ref{definition:twisted cube}).
These twisted cubes are combinatorially simpler than general string polytopes but they are not ``true''
polytopes in the sense that their faces may have various angles and the
intersection of faces may not be a face (cf. \cite[\S 2.5 and Figure
1 therein]{Grossberg-Karshon}),
and (the support of the) twisted cube may be neither
closed nor convex. An example is shown in Figure~\ref{figure:first
  example}.
In particular, 
the Grossberg-Karshon character formula takes a sum
over a set of integer lattice points, where a summand may appear with
either a plus or
minus sign
\cite[Theorem 6 and
Remark 3.22]{Grossberg-Karshon}. In this sense, the Grossberg-Karshon character formula is not,
in general,
a purely combinatorial `positive' formula.

The main result of this note, recorded in Theorem~\ref{theorem:Untwistedness and Base-point free divisor}, gives
 several equivalent conditions for the
Grossberg-Karshon twisted cubes to be  \emph{untwisted}
(cf. Definition~\ref{definition:untwisted}), i.e., it is a ``true''
(closed, convex) polytope.
In the representation-theoretic
applications of Grossberg and Karshon, this corresponds to the case
when
the Grossberg-Karshon character formula is a `positive' formula.
One of our equivalent conditions can be stated naturally in
terms of the toric variety $X(\mfc)$ obtained as the special fiber of a toric
degeneration of Bott-Samelson varieties as constructed by Pasquier \cite{Pasquier}
(which was in turn motivated from the degeneration of complex
structures from a Bott-Samelson variety to a Bott tower given in
\cite{Grossberg-Karshon}). More specifically, the condition is that a
certain torus-invariant divisor $D(\mfc, \mfl)$
on the toric variety $X(\mfc)$ is \emph{basepoint-free}. By some standard
results in toric geometry, the basepoint-free-ness of a divisor can also be
stated in terms of the \emph{Cartier data} of the divisor, and this in
turn provides us with computationally efficient methods for
determining the untwistedness of the Grossberg-Karshon twisted cube.
We note that the relationship between untwistedness and basepoint-free-ness
seems to have already been known by experts (see e.g. the comments in \cite[\S
6]{Alexeev-Brion}).
However, our computationally effective
characterization of untwistedness in terms of the Cartier data of
$D(\mfc, \mfl)$ depends on an explicit description of the line bundle
$\mathcal{L}_0$
(over the special fiber of the toric degeneration) as
$\mathcal{O}(D(\mfc, \mfl))$, i.e., the line bundle corresponding to
the divisor $D(\mfc, \mfl)$, as well as a concrete computation of
$D(\mfc, \mfl)$ \cite{Harada-Yang}.
As far as we are aware, these results
do not appear in the previous literature.

In Section~\ref{sec:positivity} we also record
the observation that the convexity of the twisted cube, and the strict
positivity of a certain subset of the
constants used in the definition of the twisted cube, are sufficient to guarantee
its untwistedness.
Finally, in Section~\ref{sec:G/B} we give two simple
necessary conditions for the untwistedness of the Grossberg-Karshon
twisted cube $(C, \rho)$ when it arises from the
representation-theoretic data of a weight $\lambda \in \t^*_\Z$
and a choice of reduced word decomposition $\underline{w}$ for an
element $w$ in the Weyl group of $G$.
The two necessary conditions are not sufficient,
 as can be seen by a simple example (cf. Example~\ref{example:eunjeong}); a full discussion
 of necessary and sufficient conditions for untwistedness, stated in
 terms of the representation theoretic data, is in
 \cite{HaradaLee}.

Finally, we remark that Kiritchenko recently has defined \emph{divided
  difference operators $D_i$ on polytopes} and, using these $D_i$
inductively and a fixed choice of reduced word decomposition
for the longest element in the Weyl group of $G$, she constructs (possibly virtual) polytopes whose lattice
points encode the character of irreducible $G$-representations.
\cite[Theorem 3.6]{Kiritchenko}.
Kiritchenko's virtual polytopes simultaneously generalize
many (virtual) polytopes already known
in representation theory, including the Grossberg-Karshon twisted
polytopes
and the well-known Gel'fand-Cetlin polytopes (which are special cases
of string polytopes).
Kiritchenko additionally shows that Gel'fand-Cetlin polytopes
degenerate to certain Grossberg-Karshon
twisted polytopes via a sequence of `string spaces'
\cite[\S 4]{Kiritchenko}.
As in the case of Grossberg and
Karshon, since Kiritchenko's polytopes may be virtual, her character
formula is also not necessarily `purely positive' in the sense that
some coefficients may appear with a minus sign.
Therefore, it is of
interest (see also her discussion in \cite[\S 3.3]{Kiritchenko}) to
determine when Kiritchenko's virtual polytopes are in fact `true'
polytopes. The results of this note may be viewed as a partial answer
to this more general question.

\medskip

\noindent \textbf{Acknowledgements.}
The first author is partially supported by an NSERC Discovery Grant (Individual),
an Ontario Ministry of Research
and Innovation Early Researcher Award, a Canada Research Chair
(Tier 2) award, and a Japan Society for the Promotion of Science
Invitation Fellowship for Research in Japan (Fellowship ID
L-13517). Both authors thank the Osaka City
University Advanced Mathematics Institute for its hospitality while
part of this research was conducted.

%%%%%%%%%%%%%%%%%%%%%%%%%%%%%%

%%%%%%%%%%%%%%%%%%%%%%%%%%%%%%%
\section{Background}\label{sec:background}

\subsection{Twisted cubes}\label{subsec:twisted cubes}
We begin by recalling the definition of \textbf{twisted cubes} given
by Grossberg and Karshon \cite[\S 2.5]{Grossberg-Karshon}. Fix a
positive integer $n$. A twisted
cube is a pair $(C(\mfc, \mfl), \rho)$ where $C(\mfc, \mfl)$
is a subset of $\R^n$ and $\rho: \R^n \to \R$ is a density function
with support precisely $C(\mfc, \mfl)$. Here $\mfc =
\{c_{ij}\}_{1 \leq i < j \leq n}$ is a collection of integers and
$\mfl = \{\ell_1, \ell_2, \ldots, \ell_n\}$ is a collection of real
numbers. In order to simplify the notation in what follows, we define
the following functions on $\R^n$ using the usual coordinates $x = (x_1, \ldots, x_n)$:
\begin{equation}\label{eq:def-A}
\begin{split}
A_n(x) = A_n(x_1, \ldots, x_n) & = \ell_n \\
A_j(x) = A_j(x_1, \ldots, x_n) & = \ell_j - \sum_{k = j+1}^n c_{jk} x_k
\textup{ for all } 1 \leq j \leq n-1.
\end{split}
\end{equation}
Notice that $A_n(x)$ is in fact a constant function and that $A_j(x)$
for $1 \leq j \leq n-1$ is a linear function depending only of the
variables $x_{j+1}, \ldots, x_n$. In order to emphasize this, we will
sometimes write $A_j(x) = A_j(x_{j+1}, \ldots, x_n)$.
We also define a function $\textup{sgn}: \R \to \{\pm 1\}$ by
$\textup{sgn}(x) = 1$ for $x<0$ and $\textup{sgn}(x) = -1$ for $x
\geq 0$.
We now give a sequence of $n$ logical statements, each of which involves a logical ``or''. Specifically,
for $k$ with $1 \leq k\leq n$ we
say (S-k) is the statement
\[
\text{(S-k)}: ``A_k(x) = A_k(x_{k+1}, \ldots, x_n) < x_k < 0 \textup{ or }
0 \leq x_k \leq A_k(x) = A_k(x_{k+1}, \ldots, x_n).''
\]
Note that the condition (S-k) depends only on the last $n-k+1$ variables
 $(x_k, \ldots, x_n)$.

With the above in place we can give the definition.

\begin{definition}\label{definition:twisted cube}
With notation as above, let $C(\mfc, \mfl)$ denote
the following subset of $\R^n$:
\begin{equation}
  \label{eq:defC}
  C(\mfc, \mfl) := \{ x = (x_1, \ldots, x_n) \in \R^n \mid x \textup{ satisfies condition (S-k) 
    for all } 1 \leq k \leq n \} \subseteq \R^n.
\end{equation}
Moreover we define a density function $\rho: \R^n \to \R$ by
\begin{equation}
  \label{eq:def-rho}
  \rho(x) =
  \begin{cases}
    (-1)^n \prod_{k=1}^n \textup{sgn}(x_k) & \textup{ if } x \in
    C(\mfc, \mfl) \\
   0 & \textup{ else.}
  \end{cases}
\end{equation}
We call the pair
$(C(\mfc, \mfl), \rho)$ the \textbf{twisted cube associated to $\mfc$
  and $\mfl$.} To simplify notation we sometimes write $C$ instead of $C(\mfc, \mfl)$.
\end{definition}

\begin{remark}${}$
  \begin{itemize}
  \item Our definition is slightly different from the one given in \cite[\S
    2.5]{Grossberg-Karshon} but the two definitions may be identified
via the coordinate change $(x_1, \ldots, x_n) \mapsto (-x_1, \ldots, -x_n)$.
  \item Grossberg and Karshon refer to the
    ``standard'' twisted cube, but we will omit the word ``standard''
    from our terminology.
  \item In the representation-theoretic applications we have in mind (see Section~\ref{sec:G/B}),
    the constants $\ell_j$ are integers.
\end{itemize}
\end{remark}

As the following examples show, a twisted cube may not be a cube in
the standard sense. In particular, the set $C = C(\mfc, \mfl)$ may be neither convex
nor closed.

\begin{example}
  Let $n=2$ and let $\mfl = (\ell_1 = 3, \ell_2 = 5)$ and $\mfc =
  \{c_{12} = 1\}$. Then
\[
C = \{ (x_1, x_2) \in \R^2 \, \mid \, 0 \leq x_2 \leq 5 \textup{ and }
( 3 - x_2 < x_1 < 0 \textup{ or } 0 \leq x_1 \leq 3 - x_2 ) \}.
\]
See Figure~\ref{figure:first example}. The value of the density function $\rho$ is recorded within each region.

\setlength{\unitlength}{0.5cm}
\begin{figure}[h]
\centering
\begin{minipage}{1\textwidth}
\centering
%\parbox{2cm}{$N = \Z^2$} &
\begin{picture}(4,6)(0,0)
\put(-2,0){\vector(1,0){7}}
\put(5.1, -0.1){\tiny{$x_1$}}
\curve(0,-1,0,0)
\put(0,5){\vector(0,1){1}}
\put(-0.2,6.2){\tiny{$x_2$}}
\put(3,-0.2){$\bullet$}\put(3,-0.7){\tiny{$3$}}
\put(-0.2,5){$\circ$}\put(0.2,5.1){\tiny{$5$}}
\put(-0.2,3){$\bullet$}
\put(-2.2,5){$\circ$}
\thicklines
\curve(-1.9,5.2,-0.2,5.2)
\curve(0,3.1,3.1,0)
\curve(0,0,3.1,0)
\curve(0,3.1,0,0)
\multiput(0,3.1)(0,0.4){5}{\line(0,1){0.2}}
\curve(-0.2,3.3,-0.4,3.5)\curve(-0.6,3.7,-0.8,3.9)
\curve(-1,4.1,-1.2,4.3)\curve(-1.4,4.5,-1.6,4.7)
\curve(-1.8,4.9,-1.9,5)
\put(0.7,1){\tiny{$+1$}}
\put(-0.8,4.3){\tiny{$-1$}}

\end{picture}

\end{minipage}

\label{figure:first example}
\end{figure}

Note in particular that $C$ does \emph{not} contain the points $\{ (0, x_2) \mid 3 <
x_2 < 5 \}$ and the points $\{ (x_1, x_2) \, \mid \, 3 < x_2 < 5
\textup{ and } x_1 = 3 - x_2 \}$, so $C$ is not closed, and it is
also not convex.
\end{example}

\begin{example}\label{example:second example}
  Let $n=2$ and let $\mfl = (\ell_1 = -7, \ell_2 = 5)$ and $\mfc =
  \{c_{12} = -1\}$. Then
\[
C = \{ (x_1, x_2) \in \R^2 \, \mid \, 0 \leq x_2 \leq 5 \textup{ and }
( -7 + x_2 < x_1 < 0 \textup{ or } 0 \leq x_1 \leq -7+ x_2 ) \}.
\]
See Figure~\ref{figure:second example}. Here $C$ is convex but not closed.

\setlength{\unitlength}{0.3cm}
\begin{figure}[h]
\centering
\begin{minipage}{1\textwidth}
\centering
%\parbox{2cm}{$N = \Z^2$} &
\begin{picture}(4,6)(0,0)
\put(-9,0){\vector(1,0){11}}
\put(2.1, -0.1){\tiny{$x_1$}}
\curve(0,-1,0,0)
\put(0,5){\vector(0,1){1}}
\put(-0.2,6.2){\tiny{$x_2$}}
\put(-7,-0.2){$\circ$}\put(-7,-0.7){\tiny{$-7$}}
\put(0.3,5.1){\tiny{$5$}}
\put(-0.3,4.9){$\circ$}
\put(-2.2,4.9){$\circ$}
\put(-0.2,-0.2){$\circ$}
\thicklines
\curve(-0.2,5.2,-1.8,5.2)
\curve(-0.3,0,-6.6,0)

\multiput(0,0.1)(0,0.7){7}{\line(0,1){0.3}}
\curve(-6.8,0.2,-6.5,0.5)
\curve(-6.2,0.9,-5.9,1.2)
\curve(-5.6,1.5,-5.3,1.8)
\curve(-5,2.1,-4.7,2.4)
\curve(-4.4, 2.7, -4.1,3)
\curve(-3.8,3.3,-3.5,3.6)
\curve(-3.2,3.9,-2.9,4.2)
\curve(-2.6,4.5,-2.3,4.8)

\put(-3,2){\tiny{$-1$}}

\end{picture}

\end{minipage}

\label{figure:second example}
\end{figure}

\end{example}

The goal of the present manuscript is to give conditions under which
the Grossberg-Karshon twisted cube associated to $\mfl$ and $\mfc$ is
``untwisted'', i.e., the support $C = C(\mfc, \mfl)$ is a  convex
polytope (in particular $C$ is closed), and the density function
$\rho$ is constant and equal to $1$ on $C$ and $0$ elsewhere. In this
direction it is useful to note that, roughly speaking, the ``twistedness'' of $(C,\rho)$
is a consequence of the logical ``or'' present in the statements (S-k).
If it happens that the constants $\mfc =
\{c_{ij}\}$ and $\mfl = \{\ell_1, \ldots, \ell_n\}$ are such that the
only way to satisfy~\eqref{eq:defC} is to always satisfy the
inequality in the second statement of (S-k) (i.e. $0 \leq x_k \leq A_k(x)$), then $C
= C(\mfc, \mfl)$ is an intersection of closed half-spaces and is a
closed convex polytope. The definition below formalizes this simple
idea.

\begin{definition}
  Let $n, \mfc, \mfl$ and $A_j$ be as above. We say that $\mfc, \mfl$
  \textbf{satisfy condition (P)} if
  \begin{enumerate}
  \item[(P-n)] $\ell_n \geq 0$
  \end{enumerate}
  and for every integer $k$ with $1 \leq k \leq n-1$, the following statement, which we refer to as condition (P-k),
    holds:
\begin{enumerate}
\item[(P-k)]
      if $(x_{k+1}, \ldots, x_n)$ satisfies
     \begin{equation*}
      \begin{split}
      0 \leq x_n \leq A_n = \ell_n \\
      0 \leq x_{n-1} \leq A_{n-1}(x_n) \\
     \vdots \\
     0 \leq x_{k+1} \leq A_{k+1}(x_{k+2}, \ldots, x_n) \\
       \end{split}
       \end{equation*}
   then $A_k(x_{k+1},\ldots, x_n) \geq 0$.
 \end{enumerate}
In particular, condition (P) holds if and only if the conditions (P-1) through (P-n) all hold.
\end{definition}

It is not difficult to see that if $\mfc, \mfl$ satisfy condition (P)
then $C = C(\mfc, \mfl)$ is the closed convex polytope defined by
\begin{equation}\label{eq:PD}
\{x \in \R^n \, \mid \, 0 \leq x_n \leq A_n=\ell_n, 0 \leq x_{n-1}
\leq A_{n-1}(x_n), \cdots, 0 \leq x_1 \leq A_1(x_2, \ldots, x_n) \}.
\end{equation}

In what follows, we frequently find it useful
 to argue inductively on the size of $n$. For this purpose it is
 convenient to define the following.

\begin{definition}\label{def:C(k)}
For a fixed $n, \mfc, \mfl$ and fixed
 $1 \leq k \leq n$, we define a subset $C(k)$ of $\R^k$ defined
 to be the set of points satisfying conditions (S-n), (S-(n-1)), ...,
 S(n-k+1). Here we identify $\R^k$ with the subspace of
 $\R^n$ corresponding to the variables $(x_{n-k+1}, \ldots, x_n)$.
\end{definition}

The following is immediate.

\begin{lemma}\label{lemma:C(k)}
  Let $n$ be a positive integer and $\mfc, \mfl$ fixed constants. For
  any $k$ with $1 \leq k \leq n$, the projection $\pi_k: \R^n
  \to \R^k$ given by $(x_1, \ldots, x_n) \mapsto (x_{n-k+1},
  \ldots, x_n)$ induces a surjective map $C = C(\mfc, \mfl) \to
  C(k)$. In particular, the image $\pi_k(C)$ is precisely equal
  to $C(k)$.
\end{lemma}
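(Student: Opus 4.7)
The plan is to prove the two inclusions $\pi_k(C) \subseteq C(k)$ and $C(k) \subseteq \pi_k(C)$ separately. The forward inclusion is essentially immediate from an observation already made in the paper: condition (S-j) depends only on the variables $(x_j, x_{j+1}, \ldots, x_n)$, hence for $j \in \{n-k+1, \ldots, n\}$ it depends only on coordinates that survive the projection $\pi_k$. Consequently, if $x \in C$ satisfies (S-1) through (S-n), then $\pi_k(x)$ still satisfies (S-(n-k+1)) through (S-n), and so lies in $C(k)$.

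For the reverse inclusion I would proceed by a backwards induction on the coordinate index. Given a point $(x_{n-k+1}, \ldots, x_n) \in C(k)$, the goal is to choose $x_{n-k}, x_{n-k-1}, \ldots, x_1$ (in that order) so that the resulting $(x_1, \ldots, x_n) \in \R^n$ satisfies every (S-j). Conditions (S-(n-k+1)) through (S-n) hold already by hypothesis, so only the conditions (S-j) for $j \leq n-k$ remain to be arranged. At the stage where $x_j$ is to be chosen, the values $x_{j+1}, \ldots, x_n$ are all fixed, so $A_j(x_{j+1}, \ldots, x_n)$ is a specific real number $a$, and (S-j) reduces to the condition ``$a < x_j < 0$ or $0 \leq x_j \leq a$'' on the single variable $x_j$.

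The essential elementary observation is that this disjunction is always solvable, regardless of the sign of $a$: if $a \geq 0$ one may take $x_j = 0$ (so the second clause holds trivially), while if $a < 0$ the open interval $(a,0)$ is nonempty and any point in it satisfies the first clause. Thus at every step a valid choice of $x_j$ exists, and iterating from $j = n-k$ down to $j = 1$ produces a preimage in $C$ of the given point in $C(k)$, establishing surjectivity onto $C(k)$.

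No step of this argument presents a genuine obstacle; the lemma is really just a reflection of the fact that, for each $j$, the set of $x_j$ satisfying (S-j) (with the later variables held fixed) is never empty. This is precisely why the ``or'' in the definition of (S-k) is responsible for the twistedness: it guarantees that projecting $C$ never loses information about the later coordinates, even when the linear functions $A_j$ take negative values.
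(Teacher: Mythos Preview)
Your argument is correct and is precisely the natural way to flesh out the claim; the paper itself offers no proof beyond declaring the lemma ``immediate,'' and your two-inclusion argument with the backwards-inductive construction of a preimage is exactly the content behind that word. The only thing worth adding is that your observation in the final paragraph---that the solvability of (S-j) for every value of $A_j$ is what drives surjectivity---is the real point, and you have identified it cleanly.
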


\subsection{Divisors, Cartier data, and polytopes}\label{subsec:divisors}

In this section we briefly recall a construction given by
Pasquier \cite{Pasquier} of a non-singular toric variety $X(\mfc)$ and a
torus-invariant divisor $D(\mfc, \mfl)$ on $X(\mfc)$, associated to
the data $\mfc = \{c_{ij}\}$ and $\mfl = \{\ell_j\}$.
As mentioned in the introduction,
the variety
$X(\mfc)$ arises as the special fiber of a toric degeneration of a
Bott-Samelson variety. Indeed, Pasquier's construction of this toric degeneration is the algebro-geometric version of the
degeneration of complex structures given by Grossberg and
Karshon in \cite{Grossberg-Karshon}. However, this
perspective is not necessary in this paper; here we only need the
explicit formula for the relevant toric-invariant divisor $D(\mfc,
\mfl)$. (A detailed discussion of the relation between $D(\mfc, \mfl)$
and the toric degeneration mentioned above is in \cite{Harada-Yang}.)

Let $n$ be a fixed positive integer and $\mfc = \{c_{ij}\}$ and $\mfl = \{\ell_k\}$ be collections of constants as in Section~\ref{sec:background}. In this section we assume the $\ell_k$ are integers for all $k$.
Let $\{e_1^+, \ldots, e_n^+\}$ denote the standard
basis of $\R^n$ and let $\Z^n$ be the standard lattice in $\R^n$
generated by the $e_j^+$. Define vectors $e_j^-$ for $j = 1, \ldots,
n$ by the formula
\begin{equation}
  \label{eq:def-ejminus}
  e_j^- := - e_j^+ - \sum_{k>j} c_{jk} e_k^+.
\end{equation}
Note that the $e_j^-$ depend only on $\mfc = \{c_{ij}\}$ and not on $\mfl$.

For basic background on toric varieties we refer the reader to \cite{Cox-Little-Schenck}.
Let $\Sigma_{\mfc}$ denote the fan consisting of cones generated by
subsets of $\{e_1^+, \ldots, e_n^+, e_1^-, \ldots, e_n^-\}$ which do
not contain any subsets of the form $\{e_j^+, e_j^-\}$. From this it
easily follows that the set of maximal ($n$-dimensional) cones
$\Sigma_{\mfc}(n)$ of $\Sigma_{\mfc}$ is in 1-1 correspondence with
the vectors $\{\sigma = (\sigma_1, \ldots, \sigma_n) \, \mid \, \sigma_i
\in \{+,-\} \}$ via the map $\sigma \mapsto \mr{Cone} \{e_1^{\sigma_1}, \ldots,
e_n^{\sigma_n}\}$. In particular, $\lvert \Sigma_{\mfc}(n) \rvert =
2^n$.
Let $X(\mfc) = X(\Sigma_{\mfc})$ denote the toric variety associated
to the fan $\Sigma_{\mfc}$.
Since each cone is generated by a $\Z$-basis, the fan $\Sigma_{\mfc}$ is smooth and thus $X(\mfc)$ is smooth \cite[Theorem 3.1.19]{Cox-Little-Schenck}.
Next, for each $j$, $1 \leq j \leq n$, let $D_{e_j^-}$ be the
torus-invariant divisor on $X(\mfc)$ corresponding to the ray spanned
by $e_j^-$ \cite[\S 4.1]{Cox-Little-Schenck}. We now define the
torus-invariant divisor $D(\mfc, \mfl)$ on $X(\mfc)$ by
\begin{equation}
  \label{eq:def-Dcl}
  D(\mfc, \mfl) := \sum_{j=1}^n \ell_j D_{e_j^-}.
\end{equation}
Since $X(\mfc)$ is non-singular, the divisor $D(\mfc, \mfl)$ is
Cartier \cite[Theorem 4.0.22]{Cox-Little-Schenck}. In what follows we give
an explicit
computation of the so-called Cartier data
$\{m_{\sigma}\}_{\sigma \in
  \Sigma_{\mfc}(n)}$ of $D(\mfc, \mfl)$
\cite[Theorem 4.2.8]{Cox-Little-Schenck}. We first
set some notation. Here we view $\R^n$ as $N \otimes \R$ where $N \cong
\Z^n$ is the free abelian group of one-parameter subgroups of a torus
$(\C^*)^n$. Let $M \cong \Z^n$ denote the free abelian group of
characters of the same torus and let $\langle \cdot, \cdot \rangle: M
\times N \to \Z$ denote the usual bilinear pairing between characters
and one-parameter subgroups \cite[\S 1.1]{Cox-Little-Schenck}. (With
the standard identifications of $M$ and $N$ with $\Z^n$, the bilinear
pairing above is just the usual inner product.)
For any fan $\Sigma$, let $\Sigma(1)$ denote the set of
$1$-dimensional cones in $\Sigma$, and for $\rho \in \Sigma(1)$ let
$D_\rho$ denote the torus-invariant divisor corresponding to $\rho$
\cite[\S 4.1]{Cox-Little-Schenck}. Further, we let $u_\rho$ be the
minimal generator of $N \cap \rho$. Let $\Sigma_{max}$ denote the set
of maximal cones, and for $\sigma \in \Sigma_{max}$ let $\sigma(1)$
denote the set of rays in $\sigma$.

We quote the following from \cite{Cox-Little-Schenck}.

 \begin{theorem}\label{theorem:Cartier} (cf. \cite[Theorem 4.2.8]{Cox-Little-Schenck})
    Let $\Sigma$ be a fan, $X(\Sigma)$ the toric variety associated to
    $\Sigma$, and $D = \sum_{\rho} a_\rho D_\rho$ a divisor. Then the
    following are equivalent:
    \begin{itemize}
    \item $D$ is Cartier.
    \item For each $\sigma \in \Sigma_{\max}$, there exists an element $m_\sigma \in
      M$ with $\langle
      m_\sigma, u_\rho \rangle = - a_\rho$ for all $\rho \in
      \sigma(1)$.
    \end{itemize}
    A collection $\{m_\sigma\}$ satisfying the above
    conditions is called the \textbf{Cartier data of $D(\mfc,
      \mfl)$}.
  \end{theorem}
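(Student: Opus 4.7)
The plan is straightforward because the statement is quoted verbatim as \cite[Theorem 4.2.8]{Cox-Little-Schenck}; hence a full proof is unnecessary, and I would simply sketch the standard toric-geometry dictionary between torus-invariant Cartier divisors on $X(\Sigma)$ and characters on each affine chart. First I would reduce to the affine case: by definition $D = \sum_{\rho} a_\rho D_\rho$ is Cartier if and only if, for every maximal cone $\sigma \in \Sigma_{\max}$, the restriction $D|_{U_\sigma}$ is principal on the affine toric variety $U_\sigma = \Spec \C[\sigma^\vee \cap M]$, and the resulting local defining equations are compatible on overlaps $U_\sigma \cap U_{\sigma'}$.

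Second, on a single chart $U_\sigma$, I would invoke the classical fact that every torus-invariant principal divisor is of the form $\mathrm{div}(\chi^m)$ for some $m \in M$, together with the formula $\mathrm{div}(\chi^m) = \sum_{\rho \in \Sigma(1)} \langle m, u_\rho\rangle D_\rho$. Writing the local equation as $\chi^{-m_\sigma}$ and comparing with $D|_{U_\sigma} = \sum_{\rho \in \sigma(1)} a_\rho D_\rho$ then forces the relation
\[
\langle m_\sigma, u_\rho\rangle = -a_\rho \quad \text{for all } \rho \in \sigma(1),
\]
which is precisely the Cartier data condition. Conversely, given a collection $\{m_\sigma\}$ satisfying this, the local data $\{\chi^{-m_\sigma}\}$ defines a Cartier divisor whose associated Weil divisor agrees with $\sum_\rho a_\rho D_\rho$ by the same calculation.

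Third, I would address the gluing condition: on an overlap $U_\sigma \cap U_{\sigma'} = U_\tau$ with $\tau = \sigma \cap \sigma'$, the two characters $\chi^{-m_\sigma}$ and $\chi^{-m_{\sigma'}}$ must cut out the same divisor on $U_\tau$, equivalently $m_\sigma - m_{\sigma'} \in \tau^\perp \cap M$. This compatibility is automatic from the pairing condition, since the rays of $\tau$ lie in both $\sigma(1)$ and $\sigma'(1)$ and the specified pairings $-a_\rho$ agree.

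The main obstacle is essentially bookkeeping: one must be careful with the sign convention (the $-a_\rho$ rather than $+a_\rho$ arising from $\mathrm{div}(\chi^m) = \sum_\rho \langle m, u_\rho\rangle D_\rho$ combined with the choice to write the local equation as $\chi^{-m_\sigma}$), and one must verify that the per-cone data automatically glues without imposing an additional cocycle condition. In any event, since the theorem is cited directly from \cite{Cox-Little-Schenck}, I would not reproduce the proof here but merely point the reader to the reference.
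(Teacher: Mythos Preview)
Your proposal is correct and matches the paper's treatment: the paper does not prove this statement at all but simply quotes it from \cite{Cox-Little-Schenck}, so there is nothing to compare beyond the citation itself. Your sketch of the standard argument is accurate and your instinct to defer to the reference is exactly what the paper does.
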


In our case, the maximal cones are precisely the
$n$-dimensional cones $\Sigma_{\mfc}(n)$, which in turn are in 1-1
correspondence with the set of $\sigma$ in $\{+,-\}^n$. Using this
identification we may refer to $\sigma = (\sigma_1, \ldots,
\sigma_n) \in \{+,-\}^n$ as a maximal cone. The following lemma gives
an explicit computation of the Cartier data $\{m_\sigma\}$ of $D(\mfc,
\mfl)$.

\begin{lemma}\label{lemma:msigma formula}
  Let $\sigma = (\sigma_1, \ldots, \sigma_n) \in \{+,-\}^n$ be a
  maximal cone in $\Sigma_{\mfc}(n)$. Then the associated Cartier data
  $m_{\sigma} = (m_{\sigma,1}, \ldots, m_{\sigma, n}) \in \Z^n$ of
  $D(\mfc, \mfl)$ is
    given by the formula
    \begin{equation}
      \label{eq:def-msigma-j}
      m_{\sigma, j}  =
      \begin{cases}
        0 & \textup{ if } \sigma_j = + \\
        A_j(m_{\sigma, j+1}, \ldots, m_{\sigma, n}) & \textup{ if }
        \sigma_j = -
      \end{cases}
    \end{equation}
for all $1 \leq j \leq n$.
\end{lemma}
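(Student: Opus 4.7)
The plan is a direct verification using Theorem~\ref{theorem:Cartier} combined with a backward induction on the index $j$. Since $X(\mfc)$ is non-singular, the Cartier data exists and is uniquely determined by the linear conditions $\langle m_\sigma, u_\rho\rangle = -a_\rho$ imposed on each ray $\rho$ of $\sigma$. So it suffices to check these conditions for the proposed formula.

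First I would identify the ingredients explicitly. The maximal cone indexed by $\sigma = (\sigma_1,\ldots,\sigma_n)\in\{+,-\}^n$ has primitive ray generators $u_{\rho_j} = e_j^{\sigma_j}$ for $j=1,\ldots,n$. Since $D(\mfc,\mfl) = \sum_{j=1}^n \ell_j D_{e_j^-}$ contains no contribution from the rays $e_j^+$, the coefficient $a_{\rho_j}$ of the ray $e_j^{\sigma_j}$ in $D(\mfc,\mfl)$ is
\[
a_{\rho_j} = \begin{cases} 0 & \text{if } \sigma_j = +, \\ \ell_j & \text{if } \sigma_j = -. \end{cases}
\]
Therefore, the defining conditions for $m_\sigma = (m_{\sigma,1},\ldots,m_{\sigma,n}) \in M \cong \Z^n$ are
\[
\langle m_\sigma, e_j^{+}\rangle = 0 \quad \text{when } \sigma_j = +, \qquad \langle m_\sigma, e_j^{-}\rangle = -\ell_j \quad \text{when } \sigma_j = -.
\]

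Using the standard identification $\langle m_\sigma, e_j^+\rangle = m_{\sigma,j}$, the first equation is precisely $m_{\sigma,j} = 0$, matching the formula in case $\sigma_j = +$. For the second, I would expand using the definition $e_j^- = -e_j^+ - \sum_{k>j} c_{jk} e_k^+$ from~\eqref{eq:def-ejminus}, obtaining
\[
\langle m_\sigma, e_j^-\rangle = -m_{\sigma,j} - \sum_{k=j+1}^n c_{jk}\, m_{\sigma,k},
\]
so the condition $\langle m_\sigma, e_j^-\rangle = -\ell_j$ is equivalent to $m_{\sigma,j} = \ell_j - \sum_{k=j+1}^n c_{jk}\, m_{\sigma,k} = A_j(m_{\sigma,j+1},\ldots,m_{\sigma,n})$, again matching the formula.

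The only subtlety is that the formula for $m_{\sigma,j}$ in case $\sigma_j = -$ depends on $m_{\sigma,j+1},\ldots,m_{\sigma,n}$, so I would proceed by backward induction on $j$, starting from $j=n$ (where $A_n = \ell_n$ is a constant, so $m_{\sigma,n}$ is well-defined directly) and descending to $j=1$. At each stage the higher-indexed entries have already been defined, so the recursion is well-posed and produces an integer vector in $M$ that satisfies all $n$ of the required linear conditions. Uniqueness of $m_\sigma$ then follows since the $n$ primitive ray generators $e_1^{\sigma_1},\ldots,e_n^{\sigma_n}$ form a $\Z$-basis of $N$ (this is why $\Sigma_\mfc$ is smooth). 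There is no real obstacle here; the whole argument is a bookkeeping exercise, and the main thing to get right is simply the sign in $e_j^-$ and the compatibility of the recursion with the definition of $A_j$ in~\eqref{eq:def-A}.
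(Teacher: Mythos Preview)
Your proposal is correct and follows essentially the same approach as the paper: both verify the defining conditions $\langle m_\sigma, e_j^{\sigma_j}\rangle = -a_{\rho_j}$ from Theorem~\ref{theorem:Cartier}, using that $e_j^+$ is a standard basis vector for the case $\sigma_j=+$ and expanding $e_j^-$ via~\eqref{eq:def-ejminus} for the case $\sigma_j=-$. Your added remarks on backward induction and uniqueness are a bit more explicit than the paper's presentation, but the underlying argument is identical.
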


\begin{proof}
By Theorem~\ref{theorem:Cartier}, the vector $m_\sigma \in \Z^n$
must satisfy the condition
\[
\langle m_\sigma, u_\rho \rangle = - a_\rho
\]
for all $1$-dimensional rays $\sigma(1)$ in $\sigma$. Recall that
$\sigma = (\sigma_1, \ldots, \sigma_n) \in \{+,-\}^n$ is identified
with $\mr{Cone}\{e_1^{\sigma_1}, \ldots, e_n^{\sigma_n}\}$ and
that $D(\mfc, \mfl) = \sum_{j=1}^n \ell_j D_{e_j^-}$. Thus $m_\sigma$
is required to satisfy the $n$ equations
\begin{equation}
  \langle m_\sigma, e_j^{\sigma_j} \rangle =
  \begin{cases}
    0  & \textup{ if } \sigma_j = + \\
   -\ell_j & \textup{ if } \sigma_j = -.\\
  \end{cases}
\end{equation}
In particular, since $e_j^+$ is a standard basis vector, we see that
if $\sigma_j = +$ then $m_{\sigma,j} = 0$. If $\sigma_j = -$, then
from the definition of the $e_j^-$ in~\eqref{eq:def-ejminus} it follows
that $m_{\sigma, j} = A_j(m_{\sigma, j+1}, \ldots, m_{\sigma, n})$.

\end{proof}

Next recall from \cite[\S 6.1]{Cox-Little-Schenck} that there
exists a polytope $P_D$ associated to any torus-invariant
Cartier divisor $D = \sum_\rho a_\rho D_\rho$ on a toric
variety. Specifically, we define (cf. \cite[Equation
(6.1.1)]{Cox-Little-Schenck}):
\begin{equation}
  \label{eq:def-PD-general}
  P_D := \{ m \in M \otimes \R \, \mid \, \langle m, u_\rho \rangle \geq -
  a_\rho \textup{ for all } \rho \in \Sigma(1) \}.
\end{equation}
In our setting, the $1$-dimensional cones are those
spanned by  $n$ primitive vectors $\{e_1^{\si_1},\dots,e_n^{\si_n}\}$,
%$\subset\{e_1^+, \ldots, e_n^+, e_1^-,\ldots, e_n^-\}$ 
so we obtain
\begin{equation}
  \label{eq:def-PD-ourcase}
  P_{D(\mfc, \mfl)} = \{ x \in M \otimes \R \cong \R^n \, \mid \, 0 \leq x_j \leq
  A_j(x) \textup{ for all } 1 \leq j \leq n \}
\end{equation}
by using the definition of the $e_j^- (j=1,\dots,n)$. Here
we take the usual identification of $M \otimes \R$
with $\R^n$.

\section{Untwistedness of twisted cubes and base-point free divisors}\label{sec:Untwistedness and Base-point free divisor}

In this section we prove the main result
(Theorem~\ref{theorem:Untwistedness and Base-point free divisor}) of
this note, which states that the Grossberg-Karshon twisted cube is
``untwisted'' (i.e. the support $C(\mfc, \mfl)$ is a closed, convex
polytope and the support function is constant and equal to $1$ on
$C(\mfc, \mfl)$, cf. Definition~\ref{definition:untwisted}) if and only if the
divisor $D(\mfc, \mfl)$ of Section~\ref{subsec:divisors} is
basepoint-free. Thus we are able to completely characterize the
untwistedness in terms of the geometry of an associated toric
variety. As a consequence, we also obtain a computationally convenient
method for determining whether or not the twisted cube is untwisted
(cf. Remark~\ref{remark:computations}).

Fix a positive integer $n$. Let
$\mfc=\{c_{ij}\}_{1\le i<j\le n}$ and $\mfl=\{\ell_1,\dots,\ell_n\}$
be fixed integers. The following proposition gives several equivalent
conditions for the untwistedness (cf. Definition~\ref{definition:untwisted})
of the Grossberg-Karshon
twisted cube $(C = C(\mfc, \mfl), \rho)$ and additionally gives an
explicit computation of $C$ in terms of toric geometry, namely, it is
the polytope $P_{D(\mfc, \mfl)}$. We note also that the
proposition shows that if $C$ is closed then it is
automatically convex. The conditions (b) and (c) in the proposition,
involving the Cartier data $\{m_\sigma\}$, are the most useful for
later applications.

\begin{proposition}\label{proposition:Untwistedness and Base-point free divisor}

 Let $\mfc, \mfl$ be as above. Let $(C(\mfc, \mfl),\rho)$ denote the
  corresponding Grossberg-Karshon twisted polytope.
Let
$m_\sigma$ be the Cartier data of the divisor $D(\mfc, \mfl)$ on $X(\mfc)$
and let $P_{D(\mfc, \mfl)}$ denote the associated polytope
given in~\eqref{eq:def-PD-ourcase}.
Then the following are equivalent:
  \begin{enumerate}
  \item[(a)] $C = C(\mfc,\mfl)$ is closed (as a subset of $\R^n$ with the
    usual Euclidean topology).
  \item[(b)] $m_\sigma \in C$ for all $\sigma$.
  \item[(c)] $m_{\sigma, k} \geq 0$ for all $\sigma$ and all $k$.
  \item[(d)] $\mfc$ and $\mfl$ satisfy the condition (P).
  \item[(e)] $C = P_{D(\mfc, \mfl)}$.

  \end{enumerate}

\end{proposition}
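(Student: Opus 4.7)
The plan is to close the cycle (d) $\Rightarrow$ (e) $\Rightarrow$ (a) $\Rightarrow$ (d), prove (b) $\Leftrightarrow$ (c) by a short case analysis using Lemma~\ref{lemma:msigma formula}, and bridge the two equivalence classes by proving (d) $\Leftrightarrow$ (c) via induction on $n$. The direction (d) $\Rightarrow$ (e) is a tautology once one compares \eqref{eq:PD} with \eqref{eq:def-PD-ourcase}, and (e) $\Rightarrow$ (a) is immediate since polytopes are closed.

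The main content is (a) $\Rightarrow$ (d), which I would prove by induction on $n$. The base case $n = 1$ is transparent: $C$ equals $(\ell_1, 0)$ if $\ell_1 < 0$ and $[0, \ell_1]$ otherwise, so closedness forces $\ell_1 \ge 0$. For the inductive step, I would first observe that closedness descends under the projection $\pi_{n-1}$ of Lemma~\ref{lemma:C(k)}. The point is that condition (S-1) constrains the fiber coordinate $x_1$ to a bounded interval with endpoints $0$ and $A_1(x_2, \ldots, x_n)$, so any approximating sequence in $C(n-1) = \pi_{n-1}(C)$ admits a lift to a bounded sequence in $C$; a convergent subsequence of the lift limits inside $\overline{C} = C$, and its projection is the desired point of $C(n-1)$. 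Since $C(n-1)$ is itself the twisted cube of truncated data on variables $(x_2, \ldots, x_n)$, the inductive hypothesis supplies (P-2) through (P-n). For (P-1), I argue by contradiction: if $A_1^* := A_1(x_2^*, \ldots, x_n^*) < 0$ at some point $(x_2^*, \ldots, x_n^*) \in C(n-1)$, then the open segment $\{(x_1, x_2^*, \ldots, x_n^*) : A_1^* < x_1 < 0\}$ sits inside $C$, while its endpoint at $x_1 = 0$ fails (S-1) and so lies outside $C$, violating closedness.

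For (b) $\Leftrightarrow$ (c), when $\sigma_k = -$ the identity $A_k(m_\sigma) = m_{\sigma, k}$ from Lemma~\ref{lemma:msigma formula} collapses (S-k) at $m_\sigma$ to $m_{\sigma, k} \ge 0$, and when $\sigma_k = +$ one has $m_{\sigma, k} = 0$ so that the nontrivial content of (S-k) is $A_k(m_\sigma) \ge 0$, which is obtained from (c) by flipping $\sigma_k$ to $-$ (under which the coordinates of the Cartier datum at indices $> k$ remain unchanged, by the recursion). For (d) $\Leftrightarrow$ (c), I induct on $n$: the direction (d) $\Rightarrow$ (c) follows by descending induction on the coordinate index, using (d) $\Rightarrow$ (e) to place each $m_\sigma$ in $P_{D(\mfc, \mfl)} \subset \R_{\ge 0}^n$. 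For (c) $\Rightarrow$ (d), after extracting (P-2) through (P-n) from the inductive hypothesis applied to the truncated data $(\mfc', \mfl')$ (whose Cartier data is obtained from that of $(\mfc, \mfl)$ by restricting to $\sigma$ with $\sigma_1 = +$ and dropping the first coordinate), one observes that the linear functional $A_1$ attains its minimum over the polytope $\{0 \le x_j \le A_j : j \ge 2\}$ at a vertex of the form $(m_{\sigma, 2}, \ldots, m_{\sigma, n})$, where it evaluates to $m_{\sigma, 1}$ for the extension $\sigma = (-, \sigma_2, \ldots, \sigma_n)$, and this is $\ge 0$ by (c), yielding (P-1).

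The principal obstacle I expect is the descent of closedness under projection in the inductive step of (a) $\Rightarrow$ (d); this is the only place where the specific bounded-fiber geometry of the twisted cube is genuinely used, and the remaining implications reduce to bookkeeping with the recursive formula for $m_\sigma$ and the standard toric correspondence between torus-invariant Cartier divisors and their polytopes.
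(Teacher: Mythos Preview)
Your argument is correct and takes a genuinely different route from the paper's. The paper organizes everything as a single cycle $(a)\Rightarrow(b)\Rightarrow(c)\Rightarrow(d)\Rightarrow(e)\Rightarrow(a)$, and the step $(a)\Rightarrow(b)$ rests on the separately proved Lemma~\ref{lemma:closure}, which shows $m_\sigma\in\overline{C}$ for every $\sigma$ by an inductive $\varepsilon$--approximation. You bypass Lemma~\ref{lemma:closure} entirely: your $(a)\Rightarrow(d)$ goes via a bounded-fiber compactness argument to push closedness down to $C(n-1)$, invokes the inductive hypothesis for (P-2)--(P-n), and then disposes of (P-1) by exhibiting a half-open segment in $C$ whose missing endpoint at $x_1=0$ witnesses non-closedness. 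This is cleaner than the paper's $\varepsilon$--$\delta$ lemma, and your observation that $A_1\ge 0$ on \emph{all} of $C(n-1)$ (not just on $P_{D'}$) actually gives a slightly stronger intermediate statement than the paper needs.

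Your $(b)\Leftrightarrow(c)$ via the sign-flip trick on $\sigma_k$ is essentially the paper's $(b)\Rightarrow(c)$ argument unpacked in both directions, and your $(c)\Rightarrow(d)$ is the same as the paper's: both reduce (P-1) to checking $A_1\ge 0$ at the vertices of the truncated polytope $P_{D'}$, and both implicitly use that those vertices lie among the $\{m_{\sigma'}\}$ (the paper states this as ``$C(n-1)$ is the convex hull of the $m_{\sigma'}$'' without further comment; it follows from the basepoint-free criterion of \cite[Theorem~6.1.7]{Cox-Little-Schenck} once one knows $m_{\sigma'}\in P_{D'}$, which you obtain from the inductive hypothesis). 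The trade-off: the paper's approach isolates Lemma~\ref{lemma:closure} as a reusable fact linking Cartier data to $\overline{C}$, while your approach is more self-contained and avoids the approximation argument altogether.
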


\begin{definition}\label{definition:untwisted}
  If any of the above (equivalent) conditions hold, we say that the
Grossberg-Karshon twisted cube $(C=C(\mfc, \mfl), \rho)$ is
\textbf{untwisted}. Equivalently, from condition (e) above it follows
that $(C,\rho)$ is untwisted exactly when $C$ is a closed convex
polytope (namely $P_{D(\mfc,\mfl)}$) and the support for $\rho$ is
constant and equal to $1$ on $C$ (since $P_{D(\mfc,\mfl)}$ lies in the
positive orthant by definition).
\end{definition}

In order to prove Proposition ~\ref{proposition:Untwistedness and Base-point free divisor} we need a preliminary lemma.

\begin{lemma}\label{lemma:closure}
  For all $\sigma \in \{+,-\}^n$, the integer vector $m_\sigma$ lies
  in the closure $\overline{C}$ of $C$ (with respect to the usual
  Euclidean topology).
\end{lemma}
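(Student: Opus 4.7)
The plan is to proceed by induction on $n$, using the recursive structure of both the Cartier data formula (Lemma~\ref{lemma:msigma formula}) and the twisted cube itself via the surjectivity of the projection $\pi_{n-1}: C \to C(n-1)$ supplied by Lemma~\ref{lemma:C(k)}. The base case $n=1$ is immediate: here the twisted cube is the interval with endpoints $0$ and $\ell_1$ (open on one end precisely when $\ell_1 < 0$), and the Cartier data $m_\sigma \in \{0, \ell_1\}$ consists exactly of these two endpoints, both of which lie in $\overline{C}$.

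For the inductive step, decompose $\sigma = (\sigma_1, \sigma')$ with $\sigma' \in \{+,-\}^{n-1}$. By Lemma~\ref{lemma:msigma formula}, the last $n-1$ components of $m_\sigma$ coincide with the Cartier data $m_{\sigma'}$ of the reduced problem in variables $x_2, \ldots, x_n$ with constants $\mfc' = \{c_{ij}\}_{2 \le i < j \le n}$ and $\mfl' = \{\ell_2, \ldots, \ell_n\}$, whose associated twisted cube is exactly $C(n-1)$. By the inductive hypothesis there is a sequence $y^{(t)} \in C(n-1)$ with $y^{(t)} \to m_{\sigma'}$, so the remaining task is to choose a first coordinate $y^{(t)}_1$ so that $(y^{(t)}_1, y^{(t)}) \in C$ and $y^{(t)}_1 \to m_{\sigma, 1}$.

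Next I would run a short case analysis based on $\sigma_1$ and the sign of $A_1^{(t)} := A_1(y^{(t)})$, which tends to $A_1(m_{\sigma'})$ by continuity of the linear function $A_1$. When $\sigma_1 = +$ the target is $0$, the shared endpoint of the two intervals in (S-1); one sets $y^{(t)}_1 = 0$ whenever $A_1^{(t)} \ge 0$ and otherwise picks a point of $(A_1^{(t)}, 0)$ within distance $1/t$ of $0$. When $\sigma_1 = -$ the target is the other endpoint $A_1(m_{\sigma'})$, and the choice is made symmetrically by approaching $A_1^{(t)}$ from inside the admissible set. In both cases a brief verification confirms $y^{(t)}_1 \to m_{\sigma, 1}$, so $m_\sigma$ is a limit of points of $C$.

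The main obstacle is the disjunctive ``or'' in the condition (S-$k$): when $A_1^{(t)} < 0$ the admissible set for $y^{(t)}_1$ is the \emph{open} interval $(A_1^{(t)}, 0)$, so the relevant targets $0$ or $A_1(m_{\sigma'})$ may themselves fail to be admissible at any given finite time $t$. This forces the approximation strategy just described: the targets must be approached from inside the open interval via a small perturbation, and the case analysis is needed precisely to handle the (possibly fluctuating) sign of $A_1^{(t)}$ along the approximating sequence.
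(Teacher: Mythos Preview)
Your proposal is correct and follows essentially the same inductive approach as the paper: both establish the base case $n=1$ directly, then lift an approximation $y^{(t)} \to m_{\sigma'}$ in $C(n-1)$ to $C$ by choosing a first coordinate close to the relevant endpoint ($0$ or $A_1$) of the interval dictated by (S-1). The only cosmetic difference is that the paper phrases the approximation via an $\varepsilon$--$\delta$ argument rather than sequences, and leaves the sign-of-$A_1$ case split implicit where you spell it out.
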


\begin{proof}
  We use induction on $n$. For the base case $n=1$, we have
\[
C = \{x \in \R \, \mid \, A_1 = \ell_1 < x_1 < 0 \textup{ or } 0 \leq
x_1 \leq A_1 = \ell_1\}.
\]
Since $\ell_1$ is a fixed constant, we have that either $\ell_1 \geq
0$ or $\ell_1 < 0$. In the case $\ell_1 \geq 0$
we have $C = \{0 \leq x_1 \leq \ell_1\}$. For $\sigma = +$ we have
by~\eqref{eq:def-msigma-j} that $m_\sigma = 0$ and for $\sigma = -$ we
have $m_\sigma = A_1 =\ell_1$. In both cases $m_\sigma \in C$ as
desired. On the other hand, if $\ell_1 < 0$ then $C = \{\ell_1 < x_1 <
0\}$ and $\overline{C} = \{ \ell_1 \leq x_1 \leq 0\}$. A similar argument
shows $m_\sigma \in \overline{C}$ for both $\sigma = +$ and $\sigma=-$.

Now suppose the lemma holds for $n-1$.
More specifically, we assume that for any choice of  vector
$\sigma' = (\sigma_2, \ldots, \sigma_n) \in \{+,-\}^{n-1}$, the vector
$m_{\sigma'}$ lies in the closure of $C(n-1)$ where $C(n-1)$ is the
polytope defined in Definition~\ref{def:C(k)}. Now suppose $\sigma =
(\sigma_1, \sigma_2, \ldots, \sigma_n) \in \{+,-\}^n$ and let
$m_\sigma$ be the associated Cartier data defined
by~\eqref{eq:def-msigma-j}. We wish to show that $m_\sigma$ lies in the
closure of $C$, i.e. for arbitrary fixed $\varepsilon > 0$ we wish to
find $x = (x_1, \ldots, x_n) \in C$ with $\lvert m_\sigma - x \rvert <
\varepsilon$, where $\lvert \cdot \rvert$ denotes the usual Euclidean
norm in $\R^n$. Let $\pi_{n-1}: \R^n \to \R^{n-1}$
denote the projection $(x_1, \ldots, x_n)
\mapsto (x_2, \ldots, x_n)$ as in Lemma~\ref{lemma:C(k)}. Note that
$\pi_{n-1}(m_\sigma) =
m_{\sigma'}$.

We first consider the case $\sigma_1 = -$, so $m_{\sigma,1} =
A_1(m_{\sigma,2}, \ldots, m_{\sigma,n}) = A_1(m_{\sigma'})$. Since the
function $A_1(x_2, \ldots, x_n)$ is continuous, for arbitrary
$\varepsilon>0$ we know there exists $\delta > 0$ such that for any
$x' = (x_2, \ldots, x_n)$ with $\lvert m_{\sigma'} - x' \rvert <
\delta$ then $\lvert A_1(m_{\sigma'}) - A_1(x') \rvert  = \lvert
m_{\sigma,1} - A_1(x') \rvert <
\frac{\varepsilon}{3}$. Let $\varepsilon' =
\min\{\frac{\varepsilon}{3}, \delta\}$. By the inductive assumption we
know there exists a point $x' = (x_2, \ldots, x_n) \in C(n-1)$ such
that $\lvert m_{\sigma'} - x' \rvert < \varepsilon'$. Moreover by
definition of $C$ we know $x = (x_1,\ldots, x_n) \in C$ (so
$\pi_{n-1}(x) = x'$) exactly if $A_1(x_2, \ldots, x_n) < x_1 < 0$ or
$0 \leq x_1 \leq A_1(x_2, \ldots, x_n)$. In particular there exists
$x\in C$ with $\lvert x_1 - A_1(x_2, \ldots, x_n) \rvert <
\frac{\varepsilon}{3}$. By the triangle inequality we have
\begin{equation}
\begin{split}
\lvert x - m_\sigma \rvert & \leq \lvert x_1 - m_{\sigma,1} \rvert +
\lvert x' - m_{\sigma'} \rvert \\
 & \leq \lvert x_1 - A_1(x_2, \ldots, x_n) \rvert + \lvert A_1(x_2, \ldots,
 x_n) - m_{\sigma,1} \rvert + \lvert x' - m_{\sigma'} \rvert \\
 & \leq \frac{\varepsilon}{3} + \frac{\varepsilon}{3} +
 \frac{\varepsilon}{3} = \varepsilon,
\end{split}
\end{equation}
as desired.

Next we consider the case when $\sigma_1 = +$, so $m_{\sigma,1} =
0$. From the inductive assumption we know that there exists $x' =
(x_2, \ldots, x_n) \in C(n-1)$ with $\lvert m_{\sigma'} - x' \rvert <
\frac{\varepsilon}{2}$. Again by definition of $C$ we know $x = (x_1,
\ldots, x_n)$ is in $C$ exactly if $A_1(x_2, \ldots, x_n) < x_1 < 0$
or $0 \leq x_1 \leq A_1(x_2, \ldots, x_n)$. In particular there exists
$x \in C$ with $\lvert x_1 \rvert < \frac{\varepsilon}{2}$. By the
triangle inequality we have
\begin{equation}
 \begin{split}
\lvert x - m_{\sigma} \rvert & \leq \lvert x_1 - m_{\sigma,1} \rvert +
\lvert x' - m_{\sigma'} \rvert \\
 & = \lvert x_1 \rvert + \lvert x' - m_{\sigma'} \rvert \\
 & \leq \frac{\varepsilon}{2}    + \frac{\varepsilon}{2}    =
 \varepsilon,
  \end{split}
\end{equation}
as desired.
This holds for arbitrary $\varepsilon > 0$ so we conclude in both
cases that $m_\sigma \in \overline{C}$ as was to be shown.
\end{proof}

\begin{proof}[Proof of Proposition~\ref{proposition:Untwistedness and Base-point free divisor}]
The proof is by induction on the size of $n$. More specifically,
we wil prove first that if $n=1$ then all the statements (a) through
(e) are equivalent. Then, assuming the equivalences for $n-1$, we prove
the equivalences for $n$.

First suppose $n=1$. In this case $\mfc = \{c_{ij}\} = \emptyset$ and $\mfl =
\{\ell_1\}$. From the definition of $C = C(\mfc, \mfl)$ we have
\[
C = \{ x \in \R : \ell_1 < x < 0 \textup{ or } 0 \leq x \leq \ell_1
\}.
\]
In particular, it is immediate that $C$ is closed if and only if
$\ell_1 \geq 0$.

(a) $\Leftrightarrow$ (b): Since $\ell_1 \geq 0$,
from the formula for $m_\sigma$ it follows that $m_{-}
= \ell_1$ and $m_+ = 0$ are both in $C$. Moreover, $m_{-}$ is in $C$
if and only if $\ell_1 \geq 0$.

(a) $\Leftrightarrow$ (c): Since $\ell_1 \geq 0$ in this case, again
from the definition of the $m_\sigma$, we
see that $m_\sigma \geq 0$ for all $\sigma$. Moreover if $m_{-} \geq
0$ then $\ell_1 \geq 0$.

(a) $\Leftrightarrow$ (d): Condition (P) in this case is precisely
that $\ell_1 \geq 0$.

(a) $\Leftrightarrow$ (e): In this case $P_{D(\mfc, \mfl)} = \{x \in \R: 0 \leq x
\leq \ell_1\}$, which is equal to $C$ exactly when $\ell_1 \geq 0$.

\medskip
Assume by induction that the statements (a) through (e) are
equivalent for $n-1$. We now do the rounds for $n$.

\medskip

(a) $\Rightarrow$ (b). By Lemma~\ref{lemma:closure}, $m_\sigma \in \overline{C}$ for all $\sigma$. Since $C$ is closed, $\overline{C} = C$ and $m_\sigma \in C$ for all $\sigma$ as desired.

(b) $\Rightarrow$ (c). Suppose for a contradiction that there exists $\sigma$ and $k$ with $m_{\sigma,k} < 0$. By definition of $m_\sigma$ this means $\sigma_k = -$ and $m_{\sigma, k} = A_k(m_{\sigma, k+1}, \ldots, m_{\sigma,n}) < 0$. But then $m_{\sigma,k}$ does not satisfy condition (S-k), since
\[
A_k(m_{\sigma, k+1}, \ldots, m_{\sigma,n}) \not < m_{\sigma,k}.
\]
Thus by definition of $C$ this implies $m_\sigma \not \in C$, contradicting (b).

(c) $\Rightarrow$ (d). By induction we already know that conditions
(P-2) through (P-n) hold, so it suffices to check the condition
(P-1). Since we assume $m_{\sigma, k} \geq 0$ for all $\sigma$ and
$k$, in particular this implies that $m_{\sigma', k} \geq 0$ for all
$\sigma' \in \{+,-\}^{n-1}$ and $2 \leq k \leq n$. By our inductive
assumption we then know that $C(2) = P_{D'}$ where $D'$ is the divisor
determined by $\mfc' = \{c_{ij}\}_{2 \leq i<j\leq n}$ and $\mfl' =
\{\ell_2, \ldots, \ell_n\}$. Moreover $C(2)$ is the convex hull of the
$m_{\sigma'}$ as $\sigma'$ ranges over $\{+,-\}^{n-1}$.
In particular, in this case the condition (P-1) is
equivalent to saying that
if $(x_2, \ldots, x_n) \in P_{D'}$ then $A_1(x_2, \ldots, x_n) \geq
0$. By the above, this is equivalent to the statement that if $(x_2,
\ldots, x_n)$ is in the convex hull of the $\{m_{\sigma'}\}$, then
$A_1(x_2, \ldots, x_n) \geq 0$. Since the function $A_1$ is linear, to
check this condition it suffices to check at the vertices
$\{m_{\sigma'}\}$. For any $\sigma' \in \{+,-\}^{n-1}$ consider
$\sigma = (-, \sigma'_2, \ldots, \sigma'_n)$.
Then $m_{\sigma,1} = A_1(m_{\sigma'})$, but
by assumption this is $\geq 0$. Hence (P-1) is satisfied, as desired.

(d) $\Rightarrow$ (e).  This is immediate from the definition of $C$
and $P_{D(\mfc, \mfl)}$.

(e) $\Rightarrow$ (a).   Since $P_{D(\mfc, \mfl)}$ is a closed convex polytope by
definition, if $C=P_{D(\mfc, \mfl)}$ then in particular $C$ is closed.

\medskip
The above shows that the conditions (a) through (e) are equivalent for
any $n$.
\end{proof}

The proposition above gives us many equivalent ways to check the
untwistedness of the Grossberg-Karshon twisted cube, and includes some
characterizations involving the Cartier data of $D(\mfc, \mfl)$ on
$X(\mfc)$. These allow us to prove our main result, which connects the
basepoint-freeness of the divisor $D(\mfc, \mfl)$ with the
untwistedness of the twisted cube.

\begin{theorem}\label{theorem:Untwistedness and Base-point free
    divisor}
  In the setting of Proposition
  ~\ref{proposition:Untwistedness and Base-point free divisor}, the
  Grossberg-Karshon twisted cube
  $(C(\mfc,\mfl), \rho)$ is untwisted if and only if
the divisor $D(\mfc, \mfl)$ on $X(\mfc)$ is
basepoint-free. Equivalently, the twisted cube is untwisted if and
only if
$m_\sigma \in P_{D(\mfc, \mfl)}$ for all $\sigma$ if and only if
$P_{D(\mfc, \mfl)}$ is the convex hull of the $\{m_\sigma\}_{\sigma \in \{+,-\}^n}$.
\end{theorem}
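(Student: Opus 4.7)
The plan is to reduce the theorem to Proposition~\ref{proposition:Untwistedness and Base-point free divisor} together with a standard characterization of basepoint-free Cartier divisors on complete smooth toric varieties. First I would check that the toric variety $X(\mfc)$ is complete. Its maximal cones are indexed by sign vectors $\sigma \in \{+,-\}^n$ via $\sigma \mapsto \mr{Cone}\{e_1^{\sigma_1},\dots,e_n^{\sigma_n}\}$; by induction on $n$, using that each $e_j^-$ has the form $-e_j^+ - \sum_{k>j} c_{jk} e_k^+$, one checks that these $2^n$ cones exhaust $\R^n$. Combined with the smoothness already noted in Section~\ref{subsec:divisors}, this places us in the setting where the standard result (cf.\ \cite[Theorem~6.3.12]{Cox-Little-Schenck}) applies: for a torus-invariant Cartier divisor $D$ on a smooth complete toric variety with Cartier data $\{m_\sigma\}$, the divisor $D$ is basepoint-free if and only if $m_\sigma \in P_D$ for every maximal cone $\sigma$, in which case $P_D$ is the convex hull of the $\{m_\sigma\}$.

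Next I would chain these facts with Proposition~\ref{proposition:Untwistedness and Base-point free divisor}. For the forward direction, suppose $(C(\mfc,\mfl),\rho)$ is untwisted. Then by conditions (b) and (e) of the Proposition, $m_\sigma \in C(\mfc,\mfl) = P_{D(\mfc,\mfl)}$ for every $\sigma$, so $D(\mfc,\mfl)$ is basepoint-free by the toric fact above. Conversely, suppose $D(\mfc,\mfl)$ is basepoint-free, equivalently $m_\sigma \in P_{D(\mfc,\mfl)}$ for every $\sigma$. Since $P_{D(\mfc,\mfl)}$ is cut out (see~\eqref{eq:def-PD-ourcase}) by the inequalities $0 \le x_k \le A_k(x)$, in particular $m_{\sigma,k} \ge 0$ for all $\sigma$ and all $k$. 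This is precisely condition (c) of Proposition~\ref{proposition:Untwistedness and Base-point free divisor}, so $(C(\mfc,\mfl),\rho)$ is untwisted.

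Finally, the convex-hull assertion is immediate: if $(C,\rho)$ is untwisted then the toric fact already gives $P_{D(\mfc,\mfl)} = \mr{conv}\{m_\sigma\}_{\sigma \in \{+,-\}^n}$, and conversely if $P_{D(\mfc,\mfl)}$ equals this convex hull then each $m_\sigma$ trivially lies in $P_{D(\mfc,\mfl)}$, reducing to the case already handled.

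The bulk of the content has been done by Proposition~\ref{proposition:Untwistedness and Base-point free divisor}, so the main obstacle is simply verifying that the hypotheses of the standard toric-geometry theorem are met for $X(\mfc)$ — namely, confirming completeness of the fan $\Sigma_\mfc$. Once completeness and smoothness are established, the rest of the argument is a direct assembly of the equivalences (b), (c), (e) of the Proposition with the well-known dictionary between basepoint-freeness and the location of the Cartier data inside the polytope $P_D$.
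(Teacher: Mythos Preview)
Your proposal is correct and follows essentially the same approach as the paper: both reduce the theorem to Proposition~\ref{proposition:Untwistedness and Base-point free divisor} combined with the standard toric characterization of basepoint-freeness via the Cartier data lying in $P_D$ (the paper cites \cite[Theorem~6.1.7]{Cox-Little-Schenck}). The only minor differences are that you explicitly verify completeness of $\Sigma_{\mfc}$ (which the paper leaves implicit) and that for the converse you invoke condition~(c) of the Proposition, whereas the paper uses the containment $P_{D(\mfc,\mfl)} \subseteq C$ to obtain condition~(b); both routes are equally short.
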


\begin{proof}
It is well-known that $D(\mfc, \mfl)$ is basepoint-free if and only if
$m_\sigma \in P_{D(\mfc,\mfl)}$ for all $\sigma$ if and only if
$P_{D(\mfc, \mfl)}$ is the convex hull of the $\{m_{\sigma}\}_{\sigma
  \in \{+,-\}^n}$
\cite[Theorem 6.1.7]{Cox-Little-Schenck}.
If $m_\sigma \in P_{D(\mfc,\mfl)}$ for all $\sigma$, then since $P_{D(\mfc, \mfl)}$ is by definition a subset of $C$
from Proposition ~\ref{proposition:Untwistedness and Base-point free
  divisor} we may
conclude that $(C,\rho)$ is untwisted. On the other hand, if $C$ is
untwisted then the conditions (b) and (e) of Proposition
~\ref{proposition:Untwistedness and Base-point free divisor} holds
which together these imply that $m_\sigma \in P_{D(\mfc, \mfl)}$ for all
$\sigma$ and thus $D(\mfc, \mfl)$ is base-point free.
\end{proof}

\begin{remark}\label{remark:computations}
In practice, the conditions which is useful for computations is Proposition ~\ref{proposition:Untwistedness and Base-point free divisor}(c).
 Indeed, the condition Proposition ~\ref{proposition:Untwistedness and Base-point free divisor}(c) is used in \cite{HaradaLee}
  in which the special case of twisted cubes
  arising from representation theory is studied in more detail.
\end{remark}

\begin{example}
Let $\mfc=\{c_{12}=2\}$ and $\mfl=\{\ell_1=4,\ell_2=3\}$.
The linear functions $A_1$ and $A_2$ are
\[A_2=3,\quad A_1(x_2)=4-2x_2.\] Since $A_1(3)=-2<0$, $\mfc$ and $\mfl$ do not satisfy the condition $(P)$.
In this example we also have $D(\mfc,\mfl)=4D_{e_1^{-}}+3D_{e_2^{-}}$
and $P_{D(\mfc, \mfl)} = \{(x_1,x_2)\in\R^2|  0 \leq x_2 \leq 3, 0
\leq x_1 \leq 4 - 2x_2 \}.$ For the maximal cone $\si=(-,-)$ spanned
by the ray $e_1^{-}$,  and $e_2^{-}$, we see that
$m_{\si}=(-2,3)\notin P_D$. Thus $D(\mfl, \mfc)$ is not base-point free.
\end{example}

\section{Untwistedness vs. convexity via positivity of the $\ell_k$}\label{sec:positivity}

As Example
~\ref{example:second example} shows,
 the untwistedness of the twisted cube
 is not equivalent to the convexity of the twisted cube, i.e., a
 twisted cube may be convex but not untwisted.
 In this section we study their relation via the positivity of the
 collection of integers $\ell_1,\dots,\ell_n$, which in turn relates
 to the effectivity of the divisor $D(\mfc, \mfl)$.

We begin with the following observation.

\begin{lemma}\label{lemma:positivity}
  Let $n$ be a positive integer and $\mfc = \{c_{ij}\}, \mfl =
  \{\ell_1, \ldots, \ell_n\}$ be fixed integers. If the
  Grossberg-Karshon twisted cube $(C = C(\mfc, \mfl), \rho)$ is
  untwisted, then all $\ell_i \geq 0$.
\end{lemma}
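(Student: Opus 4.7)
The plan is to deduce this immediately from the equivalent characterizations of untwistedness already established in Proposition~\ref{proposition:Untwistedness and Base-point free divisor}. The cleanest route is via condition (c): if $(C,\rho)$ is untwisted then $m_{\sigma,k} \geq 0$ for every maximal cone $\sigma \in \{+,-\}^n$ and every index $k$. The idea is to exhibit, for each $i \in \{1,\ldots,n\}$, a single carefully-chosen $\sigma$ whose Cartier data reads off $\ell_i$ in one of its coordinates.

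Concretely, for fixed $i$ I would consider the cone $\sigma^{(i)} \in \{+,-\}^n$ defined by $\sigma^{(i)}_i = -$ and $\sigma^{(i)}_j = +$ for $j \neq i$. Unwinding the recursion in Lemma~\ref{lemma:msigma formula} starting from the last coordinate downward, every index $j \neq i$ satisfies $\sigma^{(i)}_j = +$ and therefore contributes $m_{\sigma^{(i)},j} = 0$; when one reaches $j = i$, the formula gives
\[
m_{\sigma^{(i)},i} = A_i\bigl(m_{\sigma^{(i)},i+1},\ldots,m_{\sigma^{(i)},n}\bigr) = A_i(0,\ldots,0) = \ell_i,
\]
since all higher-index coordinates of $m_{\sigma^{(i)}}$ vanish and $A_i(x_{i+1},\ldots,x_n) = \ell_i - \sum_{k>i} c_{ik} x_k$ by definition. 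Applying condition (c) to the $i$-th coordinate of $m_{\sigma^{(i)}}$ then yields $\ell_i \geq 0$, and letting $i$ range over $\{1,\ldots,n\}$ finishes the argument.

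There is no real obstacle here; the only mild subtlety is making sure the recursion for $m_{\sigma^{(i)}}$ really does collapse to $\ell_i$, which hinges on the fact that $A_i$ depends only on the strictly-later coordinates (all of which are forced to $0$ by the $+$ entries of $\sigma^{(i)}$). As an alternative, one could argue directly from condition (d) by downward induction on $k$: the base case $(P\text{-}n)$ is exactly $\ell_n \geq 0$, and if $\ell_{k+1},\ldots,\ell_n \geq 0$ are already known, then the origin in the coordinates $(x_{k+1},\ldots,x_n)$ satisfies the hypothesis of $(P\text{-}k)$, which therefore forces $A_k(0,\ldots,0) = \ell_k \geq 0$. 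Either approach is short enough that the Lemma follows in a few lines.
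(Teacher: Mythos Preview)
Your proposal is correct. Your primary argument via condition~(c) of Proposition~\ref{proposition:Untwistedness and Base-point free divisor} is slightly more direct than the paper's proof, which instead uses condition~(d) (i.e.\ condition~(P)) and an induction on $n$: once $\ell_2,\ldots,\ell_n \geq 0$ are known from the inductive hypothesis, the origin satisfies the hypothesis of (P-1), forcing $A_1(0,\ldots,0)=\ell_1\geq 0$. That is exactly the alternative argument you sketch at the end, so both routes appear; your Cartier-data approach has the minor advantage of avoiding any induction altogether, while the paper's approach stays entirely within the elementary polytope description without reference to $m_\sigma$.
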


\begin{proof}
  If the Grossberg-Karshon twisted cube is untwisted, then by
  Theorem ~\ref{theorem:Untwistedness and Base-point free divisor} we know condition (P) holds. So we wish
  to show that if condition (P) holds for $\mfc$ and $\mfl$ then
  $\ell_i \geq 0$ for all $i$.

We prove this by induction on the size of $n$. Suppose $n=1$. In this
case the condition (P) consists of the single statement (P-1) that
$\ell_1 \geq 0$. Now suppose the statement holds for
$n-1$. In particular, since condition (P) includes the $n-1$ statements
(P-2) to (P-n), we know that $\ell_i \geq 0$ for all $i$ with $2 \leq
i \leq n$. In particular this means that the point $(x_2, \ldots, x_n)
= (0,0,\ldots,0)$ satisfies the hypothesis of condition (P-1), since
in this case $A_k(x_{k+1}, \ldots, x_n) = \ell_k \geq 0$ and $x_k = 0
\leq \ell_k$ for all $k$
with $2 \leq k \leq n$. Then the conclusion of (P-1) states that
$A_1(x_2, \ldots, x_n) = A_1(0,0,\ldots,0) = \ell_1$ must be
non-negative, as desired.
\end{proof}

The theorem below shows that in our setting, if the divisor $D(\mfc,
\mfl)$ is basepoint-free then it is also effective. We also prove a
partial converse.

\begin{theorem}\label{theorem:positivity}
Let $n$ be a positive integer and let $\mfc = \{c_{ij}\}$ and $\mfl =
(\ell_k)$ be fixed integers.
\begin{enumerate}
  \item[(a)] If the Grossberg-Karshon twisted cube $(C=C(\mfc, \mfl),
    \rho)$ is untwisted, then $C(\mfc, \mfl)$ is convex and $\ell_k
    \geq 0$ for all $k$.
    Equivalently, if $D(\mfc, \mfl)$ is basepoint-free,
    then it is also effective.
\item[(b)] If $C(\mfc,\mfl)$ is convex and $\ell_k$ is strictly
  positive for all $k$, then the twisted cube $(C(\mfc, \mfl),\rho)$
  is untwisted (equivalently, $D(\mfc,\mfl)$ is base-point free).
\end{enumerate}
\end{theorem}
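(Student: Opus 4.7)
For part (a), the argument is immediate from the machinery already developed. Untwistedness of $(C(\mfc,\mfl),\rho)$ means by Definition~\ref{definition:untwisted} that $C(\mfc,\mfl)$ coincides with the closed convex polytope $P_{D(\mfc,\mfl)}$, so convexity is automatic, and the nonnegativity $\ell_k \geq 0$ is precisely the content of Lemma~\ref{lemma:positivity}. For the parenthetical reformulation, basepoint-freeness of $D(\mfc,\mfl)$ is equivalent to untwistedness by Theorem~\ref{theorem:Untwistedness and Base-point free divisor}, and effectivity of $D(\mfc,\mfl) = \sum_k \ell_k D_{e_k^-}$ is exactly the condition $\ell_k \geq 0$ for all $k$, so the two formulations coincide.

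The substance is in part (b). My plan is to induct on $n$ and show that condition (P) of Proposition~\ref{proposition:Untwistedness and Base-point free divisor}(d) must hold, which then yields untwistedness. The base case $n = 1$ is trivial since (P) reduces to $\ell_1 \geq 0$, which follows from the hypothesis $\ell_1 > 0$. For the inductive step, I would first observe that since $C = C(\mfc,\mfl)$ is convex and the projection $\pi_{n-1}: \R^n \to \R^{n-1}$ of Lemma~\ref{lemma:C(k)} is linear and surjects $C$ onto $C(n-1)$, the set $C(n-1)$ is convex. Identifying $C(n-1)$ with the Grossberg-Karshon twisted cube associated to $\mfc' = \{c_{ij}\}_{2 \leq i < j \leq n}$ and $\mfl' = (\ell_2,\ldots,\ell_n)$, the inductive hypothesis applied to this $(n-1)$-dimensional setup (convex support and strictly positive constants $\ell'_k = \ell_{k+1} > 0$) yields conditions (P-2) through (P-n).

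Establishing (P-1) is the heart of the argument, and I would prove it by contradiction. Suppose there exists $y' = (y_2,\ldots,y_n)$ with $0 \leq y_k \leq A_k(y_{k+1},\ldots,y_n)$ for every $k = 2,\ldots,n$ but $A_1(y') < 0$. Strict positivity of the $\ell_k$ ensures $(0,0,\ldots,0) \in C$, and any choice $x_1 \in (A_1(y'), 0)$ produces a second point $(x_1, y_2,\ldots,y_n) \in C$. Convexity of $C$ then forces the entire segment $(tx_1, ty_2,\ldots,ty_n)$ for $t \in [0,1]$ to lie in $C$. Linearity of $A_1$ gives
\begin{equation*}
A_1(ty_2,\ldots,ty_n) = (1-t)\ell_1 + tA_1(y'),
\end{equation*}
which vanishes at $t^* = \ell_1/(\ell_1 - A_1(y')) \in (0,1)$. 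At this value $t^*x_1 < 0$, so (S-1) (with $A_1 = 0$) forces either the equality $t^*x_1 = 0$ or the strict inequality $A_1 < t^*x_1 < 0$, which in turn requires $0 < t^*x_1$; both are false, contradicting membership of the interpolating point in $C$ and thereby contradicting convexity.

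The main obstacle is this final step in the inductive argument. The key insight is that strict positivity of every $\ell_k$ is exactly what places the origin inside $C$ and thus supplies the reference point that makes the convexity argument go through; a weaker hypothesis allowing some $\ell_k = 0$ would require a more delicate choice of anchor point and is what prevents a full converse to Lemma~\ref{lemma:positivity} (as Example~\ref{example:second example} already illustrates).
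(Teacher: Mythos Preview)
Your proof of part (a) matches the paper's exactly. For part (b), both your argument and the paper's rest on the same key observation: strict positivity of every $\ell_k$ places the origin inside $C$, and then convexity forces a contradiction along a segment emanating from the origin. The packaging differs, however. The paper argues directly, with no induction: assuming $C \neq P_{D(\mfc,\mfl)}$, it picks any $a \in C \setminus P_{D(\mfc,\mfl)}$, notes that some coordinate satisfies $a_j < 0$ and hence $A_j(a) < a_j < 0$, observes that every point $ta$ with $0 < t \leq 1$ lies in $C$ and has $ta_j < 0$, and then unwinds the inequality $A_j(ta) < ta_j$ into the uniform lower bound $t > \ell_j / (a_j + \sum_{k>j} c_{jk} a_k)$, contradicting $t \to 0^+$. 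Your version instead inducts on $n$ to first secure conditions (P-2) through (P-n), then manufactures a specific witness $(x_1, y')$ from a hypothetical failure of (P-1) and exhibits an explicit parameter $t^*$ at which (S-1) fails outright. The paper's route is shorter and bypasses the inductive scaffolding; yours is more explicit in pinpointing the exact point where the segment leaves $C$. Both arguments are correct.
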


\begin{proof}
  If the twisted cube is untwisted, then by Theorem
  ~\ref{theorem:Untwistedness and Base-point free divisor} we know
  that $C=C(\mfc,\mfl)=P_{D(\mfc,\mfl)}$ and thus $C$ is convex. Then by Lemma~\ref{lemma:positivity} above, we see all $\ell_i$ are non-negative.

Next suppose that $C$ is convex and all $\ell_k$ are strictly
positive. Suppose in order to obtain a contradiction that $(C = C(\mfc, \mfl),
\rho)$ is \emph{not} untwisted. By Theorem~\ref{theorem:Untwistedness and Base-point free divisor} this means
that $C \neq P_D$. Since by definition $P_D$ is always a subset of
$C$, this means that there exists a point $a = (a_1, \ldots, a_n)$ in
$C \setminus P_D$. Since $a \not \in P_D$, we know there exists
some $j$ such that $a_j < 0$. On the other hand, $a \in C$ so we must
have $A_j(a_{j+1}, \ldots, a_n) < a_j < 0$.

Now let $\Gamma= \{A_j(x) < x_j < 0\}$ be the intersection of the two
open half spaces $\{A_j(x)<x_j\}$ and $\{x_j<0\}$.
Also let $\Gamma' = \{0 \leq x_j \leq A_j(x)\}$ be the intersection of
the two closed half spaces $\{0\le x_j\}$ and $\{x_j \le A_j(x)\}$.
Then by definition $C\subset \Gamma\cup\Gamma'$. Note that $a \in \Gamma$.
Also note that since all $\ell_i$ are positive, the origin
$(0,0,\ldots, 0)$ is contained in $C$. Since $C$ is in addition
assumed to be convex, we know that $ta \in C$ for all $t \in
[0,1]$. In fact, since $a \in \Gamma$, we further know that $ta \in C
\cap \Gamma$ for any
$t$ with $0 < t \leq 1$. Thus
\[
A_j(ta)-ta_j<0\Leftrightarrow
\ell_j-\sum_{k>j}c_{jk}ta_k-ta_j<0\Leftrightarrow
\ell_j<t \left(a_j+\sum_{k>j}c_{jk}a_k \right).
\]
However,  since by assumption $\ell_j>0$, the last condition in the
string of equivalences above implies
that the expression $a_j+\sum_{k>j}c_{jk}a_k$ must be strictly
positive.
Thus
\[ ta\in C\cap\Gamma^- \Rightarrow t>
\frac{\ell_j}{a_j+\sum_{k>j}c_{jk}a_k}. \]
This is a contradiction, since we may choose $t$ to be arbitrarily
close to $0$.
\end{proof}

\section{Connection to flag varieties $G/B$}\label{sec:G/B}

The study of twisted polytopes initiated by Grossberg and Karshon's
work \cite{Grossberg-Karshon} was motivated by representation theory and, in
particular, the search for polytopes
whose lattice points encode the characters of representations.
In this
section we record some initial observations concerning the
untwistedness of the Grossberg-Karshon twisted cube
when the constants $\mfc = \{c_{ij}\}$ and $\mfl = \{\ell_k\}$ are
determined from certain representation-theoretic data.

Following \cite{Grossberg-Karshon}, we let $G$ be a complex semisimple
simply-connected linear algebraic group
of rank $r$ over an
algebraically closed field $\mf{k}$.
Choose a Cartan subgroup $H\subset G$, and a Borel subgroup
$B\supset H$.
Let $\al_i$ denote the simple roots, $\al_i^{\vee}$ the coroots,
and $\varpi_i$ the fundamental weights (characterized by the relation
$\langle\varpi_i,\al_j^{\vee}\rangle=\de_{ij}$). 
Recall that the simple reflections
$s_{\be}:X\rightarrow X,
\lambda\mapsto\lambda-\langle\lambda,\be^{\vee}\rangle \be$
generate the Weyl group $W$, where we let $\be$ range among the
simple roots $\al_1,\dots,\al_r$.

Fix a choice $\lambda = \lambda_1 \varpi_1 + \ldots + \lambda_r
\varpi_r$ in the weight lattice, where $\lambda_i \in \Z$. Also fix a choice
$\underline{w}=s_{\be_1}\cdots s_{\be_n}$ of  decomposition of
an element $w$ of $W$.
For such $\lambda$ and $\underline{w}$
we define constants $\mfc, \mfl$ by the formulas (cf. \cite[\S 3.7]{Grossberg-Karshon})
\begin{equation}
  \label{eq:def cij rep}
  c_{ij}=\lee \be_j,\be_i^{\vee}\ree
\end{equation}
for $1 \leq i < j \leq n$, and
\begin{equation}
  \label{eq:def ell rep}
\ell_1=\lee \lambda,\be_1^{\vee}\ree, \dots, \ell_n=\lee \lambda,\be_n^{\vee}\ree.
\end{equation}
Note that if the
$j$-th simple reflection in the given
 word decomposition
$\underline{w}$ is equal to $\alpha_i$, then $\ell_j = \la_i$.

The following simple example illustrates these definitions.

\begin{example}
  Consider $G = SL(3,\C)$ with positive roots $\{\alpha_1,
  \alpha_2\}$. Let $\lambda = 2 \varpi_1 + \varpi_2$ and
  $\underline{w} = s_{\alpha_1} s_{\alpha_2} s_{\alpha_1}$. Then
  $(\beta_1, \beta_2, \beta_3) = (\alpha_1, \alpha_2, \alpha_1)$ and we
  have
  \begin{equation}
   \begin{split}
    c_{12} & = \langle \alpha_2, \alpha_1^{\vee} \rangle = -1 \\
    c_{13} & = \langle \alpha_1, \alpha_1^{\vee} \rangle = 2 \\
    c_{23} & = \langle \alpha_1, \alpha_2^{\vee} \rangle = -1 \\
    \mfl = (\ell_1, \ell_2, \ell_3) & = (\langle \lambda, \alpha_1^{\vee} \rangle = 2, \langle \lambda, \alpha_2^{\vee} \rangle = 1, \langle \lambda, \alpha_1^{\vee} \rangle = 2).
    \\
   \end{split}
  \end{equation}
\end{example}

As mentioned in the introduction, in the setting above Grossberg and Karshon derive a Demazure-type character formula for the irreducible $G$-representation corresponding to $\lambda$, expressed as a sum over the lattice points $\Z^n \cap C(\mfc, \mfl)$ in the Grossberg-Karshon twisted cube $(C(\mfc, \mfl),\rho)$ \cite[Theorem 5 and Theorem 6]{Grossberg-Karshon}. The lattice points appear with a plus or minus sign according the density function $\rho$. Hence their formula 
is a \emph{positive} formula if $\rho$ is constant and equal to $1$
 on all
of $C(\mfc,\mfl)$. 
From the point of view of representation theory it
is therefore of interest to determine conditions on the weight
$\lambda$ and the
 word decomposition $\underline{w} =
s_{\beta_1} \cdots s_{\beta_n}$ such that the associated
Grossberg-Karshon twisted cube is in fact untwisted. The main result
of this section, Theorem~\ref{theorem:necessary conditions} below,
gives 2 simple criteria which are necessary for untwistedness. A more
detailed analysis of this situation, which gives both necessary and sufficient conditions,
is in \cite{HaradaLee}.

\begin{theorem}\label{theorem:necessary conditions}

Let $\lambda$ and $\underline{w}$ be as above and let $\mfc$ and
$\mfl$ be defined from $\lambda$ and $\underline{w}$ as in~\eqref{eq:def cij rep} and~\eqref{eq:def ell
  rep}. If the corresponding Grossberg-Karshon twisted cube $(C(\mfc,
\mfl), \rho)$ is untwisted, then:
\begin{enumerate}
\item If $\alpha_i$ appears in the
 word decomposition
  $\underline{w}$, then $\lambda_i \geq 0$. In particular, if all the
  simple roots $\{\alpha_1, \ldots, \alpha_r\}$ appear at least once
  in $\underline{w}$, then $\lambda$ is dominant.
\item If $\alpha_i$ appears more than once in $\underline{w}$, then
  $\lambda_i = 0$.
\end{enumerate}

\end{theorem}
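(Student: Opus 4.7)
My plan is to deduce part (1) directly from the positivity lemma already proven in Section 3, and to prove part (2) by constructing an explicit maximal cone $\sigma$ whose Cartier datum forces $\lambda_i \leq 0$, invoking Proposition~\ref{proposition:Untwistedness and Base-point free divisor}(c).

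For part (1), the key observation is that if $\alpha_i$ appears as the $j$-th simple reflection in $\underline{w}$, i.e. $\beta_j = \alpha_i$, then by definition
\[
\ell_j \;=\; \langle \lambda, \beta_j^\vee \rangle \;=\; \langle \lambda, \alpha_i^\vee \rangle \;=\; \lambda_i.
\]
Since untwistedness of $(C(\mfc,\mfl),\rho)$ implies $\ell_k \geq 0$ for all $k$ by Lemma~\ref{lemma:positivity}, we immediately obtain $\lambda_i \geq 0$. The second sentence of (1) follows since dominance of $\lambda$ means $\lambda_i \geq 0$ for every $i$.

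For part (2), suppose $\alpha_i$ appears at two positions $j_1 < j_2$ in $\underline{w}$, so $\beta_{j_1} = \beta_{j_2} = \alpha_i$. Then from the defining formulas we record two facts:
\[
\ell_{j_1} = \ell_{j_2} = \lambda_i, \qquad c_{j_1 j_2} \;=\; \langle \beta_{j_2}, \beta_{j_1}^\vee \rangle \;=\; \langle \alpha_i, \alpha_i^\vee \rangle \;=\; 2.
\]
The plan is now to exhibit a single maximal cone $\sigma$ for which $m_{\sigma, j_1}$ is computable and visibly forces $\lambda_i \leq 0$. Specifically, I will take the $\sigma = (\sigma_1,\ldots,\sigma_n) \in \{+,-\}^n$ defined by $\sigma_{j_1} = \sigma_{j_2} = -$ and $\sigma_k = +$ for every $k \notin \{j_1, j_2\}$. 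Applying the recursion~\eqref{eq:def-msigma-j} from top index down, all coordinates $m_{\sigma, k}$ with $k > j_1$ and $k \neq j_2$ vanish because $\sigma_k = +$; for $k = j_2$ one gets $m_{\sigma, j_2} = A_{j_2}(0,\ldots,0) = \ell_{j_2} = \lambda_i$. Therefore
\[
m_{\sigma, j_1} \;=\; A_{j_1}(m_{\sigma, j_1+1}, \ldots, m_{\sigma, n}) \;=\; \ell_{j_1} - c_{j_1 j_2}\,m_{\sigma, j_2} \;=\; \lambda_i - 2\lambda_i \;=\; -\lambda_i.
\]
By Proposition~\ref{proposition:Untwistedness and Base-point free divisor}(c), untwistedness forces $m_{\sigma, j_1} \geq 0$, so $\lambda_i \leq 0$. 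Combined with part (1) this yields $\lambda_i = 0$, as desired.

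The main content of the argument is in part (2), and the only subtlety is picking the right $\sigma$: all entries must be $+$ except at the two positions where $\alpha_i$ occurs, so that in the recursive computation of $m_{\sigma, j_1}$ only the coefficient $c_{j_1 j_2} = 2$ contributes, producing the cancellation $\ell_{j_1} - 2\ell_{j_2} = -\lambda_i$. Beyond that, everything is a direct application of the Cartier-data characterization of untwistedness established in Section 2.
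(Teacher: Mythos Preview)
Your proof is correct and follows essentially the same approach as the paper. Part (1) is identical; for part (2), both you and the paper isolate the same key computation $A_{j_1}(0,\ldots,\ell_{j_2},\ldots,0) = \lambda_i - 2\lambda_i = -\lambda_i$, with the only difference being that the paper frames it via condition (P) (checking the point $(0,\ldots,\ell_{j_2},\ldots,0)$ satisfies the hypotheses of (P-$j_1$)) while you frame it via the equivalent condition (c) on Cartier data for the explicit $\sigma$ with minus signs exactly at $j_1,j_2$. Your route is marginally cleaner, since invoking~\eqref{eq:def-msigma-j} directly sidesteps the small inductive verification the paper needs to confirm the hypotheses of (P-$j_1$).
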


\begin{proof}

Since we assume that $(C(\mfc, \mfl), \rho)$ is untwisted, from
Theorem~\ref{theorem:positivity} we know that $\ell_j \geq 0$ for all $1 \leq j \leq
n$. By the definition of the $\ell_j$ in~\eqref{eq:def ell rep}, the
first statement immediately follows.

Now suppose $\alpha_i$ appears more than once in $\underline{w}$,
i.e. there exist distinct $j < k$ with $\beta_j = \beta_k =
\alpha_i$. Consider the point $(x_{j+1}, \ldots, x_k, \ldots, x_n) =
(0, \ldots, \ell_k, \ldots, 0)$ where all coordinates are $0$ except
the $x_k$ coordinate, which is equal to $\ell_k = \lambda_i$. By
assumption, $(C(\mfc, \mfl), \rho)$ is untwisted, so condition (P)
holds. We claim that the chosen point $(x_{j+1}, \ldots, x_k,
\ldots,x_n)$ satisfies the hypotheses in the statement of condition
(P-j). To see this, first observe that the last $n-k$ coordinates are
all $0$ and $A_k(0,\ldots, 0) = \ell_k$, so the last $n-k+1$
coordinates satisfy the hypotheses for condition (P-(k-1)). The
conclusion of (P-(k-1)) then states that $A_{k-1}(\ell_k, 0, \ldots,0)
\geq 0$. Hence $(x_{k-1}=0, x_k = \ell_k, 0, \ldots, 0)$ satisfies the
hypotheses for the next condition (P-(k-2)). Proceeding similarly
shows that $(x_{j+1}, \ldots, x_k, \ldots, x_n)$ satisfies the
hypotheses of (P-j). Then the conclusion of (P-j) states that $A_j(0,
\ldots,\ell_k, 0 \ldots, 0) \geq 0$. By definition
\begin{equation}
  \label{eq:1}
  \begin{split}
    A_j(0, \ldots, \ell_k, \ldots, 0) & = \ell_j - c_{jk} x_k \\
 & = \ell_j - \langle \beta_k, \beta_j \rangle \ell_k \\
 & = \lambda_i - \langle \alpha_i, \alpha_i^{\vee} \rangle \lambda_i
 \\
 & = \lambda_i - 2 \lambda_i \\
 & = - \lambda_i.
  \end{split}
\end{equation}
Hence $\lambda_i \leq 0$. But we already knew $\lambda_i \geq 0$, so
we conclude $\lambda_i =0$, as required.
\end{proof}

The converse of Theorem~\ref{theorem:necessary conditions} does not hold, as can be seen
by the following. We thank Eunjeong Lee for
providing this example.

\begin{example}\label{example:eunjeong} Consider $G=\mr{SL}_4(\C)$.
Choose
$\underline{w}=s_{\al_2}s_{\al_1}s_{\al_2}s_{\al_3}s_{\al_2}s_{\al_1}$
and $\la=2\varpi_3$. Then the corresponding choices of constants are
\[\mfc=\begin{array}{cccccc}
c_{12} & c_{13} & c_{14} & c_{15} & c_{16}\\
       & c_{23} & c_{24} & c_{25} & c_{26}\\
       &        & c_{34} & c_{35} & c_{36}\\
       &        &        & c_{45} & c_{46}\\
       &        &        &        & c_{56}

  \end{array}=\begin{array}{rrrrrr}
-1 & 2 & -1 & 2 & -1\\
       & -1 & 0 & -1 & 2\\
       &        & -1 & 2 & -1\\
       &        &        & -1 & 0\\
       &        &        &        & -1

  \end{array}\]
and
\[\mfl=\begin{array}{cccccc} \ell_1 & \ell_2 & \ell_3 & \ell_4 &
  \ell_5 & \ell_6 \end{array}=\begin{array}{cccccc} 0& 0 &0 & 2 & 0 &
  0 \end{array}.
\]
The linear functions $A_j$ arising in the definition of the associated
twisted cube are given by
\[\begin{array}{lcrrrrrr}
A_1 & = &0 &+x_2&-2x_3&+x_4&-2x_5&+x_6\\
A_2 & = &0 &&+x_3&&+x_5&-2x_6\\
A_3& = &0&&&+x_4&-2x_5&+x_6\\
A_4 & = & 2&&&&+x_5&\\A_5 & = & 0&&&&&x_6\\
   A_6 & = & 0&&&&\\
\end{array}\]
It can now be seen that for the maximal cone $\si=(-,-,-,-,-,-)$, the
corresponding Cartier data $m_{\si}=(-2,2,2,2,0,0)$ is not contained
in $P_{D(\mfc,\mfl)}$. Thus by Theorem~\ref{theorem:Untwistedness and Base-point free divisor}, the Grossberg-Karshon
twisted cube is \emph{not} untwisted in this case, despite the fact
that both of the conditions stated in Theorem~\ref{theorem:necessary conditions} are satisfied.
\end{example}

\end{document}